\newtheorem{thm}{Theorem}
\newtheorem{pro}{Proposition}
\newcommand{\calE}{{\cal E}}
\newcommand{\calS}{{\cal S}}
\newcommand{\proof}{\noindent {\bf Proof} \quad}
\newcommand{\eproof}{$\quad \Box$}
\newtheorem{Remark}{Remark}
\title{Double Well Potential Function and Its Optimization in
The n-dimensional Real Space -- Part I}
\author{ {\small  Shu-Cherng Fang} \\
{\it\small Department of Industrial and Systems Engineering, North Carolina State University, USA}\\
{\small David Yang Gao} \\
{\it \small School of Science, Information Technology, and Engineering, University of Ballarat, Australia}\\
{\small  Gang-Xuan Lin}\\
{\small  Ruey-Lin Sheu} \\
{\it\small Department of Mathematics, National Cheng Kung University, Taiwan}\\
{\small  Wen-Xun Xing}\\
{\it\small  Department of Mathematical Sciences, Tsinghua University, P. R. China}}
\begin{document}\date{}

\maketitle

\begin{abstract}
A special type of multi-variate polynomial of degree 4, called the
double well potential function, is studied. When the function is
bounded from below, it has a very unique property that two or more
local minimum solutions are separated by one local maximum solution,
or one saddle point. Our intension in this paper is to categorize
all possible configurations of the double well potential functions
mathematically. In part I, we begin the study with deriving the
double well potential function from a numerical estimation of the
generalized Ginzburg-Landau functional. Then, we solve the global
minimum solution from the dual side by introducing a geometrically
nonlinear measure which is a type of Cauchy-Green strain. We show
that the dual of the dual problem is a linearly constrained convex
minimization problem, which is mapped equivalently to a portion of
the original double well problem subject to additional linear
constraints. Numerical examples are provided to illustrate the
important features of the problem and the mapping in between.

%
%
%, we propose to look into a general model containing a discrete
%version of the generalized Ginzburg-Landau system. We are interested
%in finding all extremal points, including the local optimizers,
%global optimizers and saddle points. We show that the double well
%potential problem can be transformed into a (non-convex) quadratic
%programming with one non-convex  but positive semi-definite
%quadratic constraint. By the positive semi-definiteness of the
%constraint, we decompose the space to attain the positive definite
%part of the constraint. After space reduction, the objective
%function and the constraint of the subproblem satisfy simultaneously
%diagonalize via congruence, which have many analytic results. We
%propose the solution for the double well potential problem by
%solving the subproblem via the canonical duality technique. Since
%this research is the first ever mathematical programming point of
%view to analyze the generalized Ginzburg-Landau system, we believe
%that the results must be novel, prominent and practical.

\end{abstract}
{\bf Key Words:} Non-convex quadratic programming, Polynomial
optimization, Generalized Ginzburg-Landau functional, Double well
potential, Canonical duality.

\section{Introduction}\label{sec0}
In this paper, we propose a model that minimizes a special type of
multi-variate polynomial of degree 4 in the following form:
%The double well potential problem that we propose is to minimize
%globally the following :
\begin{eqnarray}\label{DWP}
(\mathrm{DWP}):\ \ \  \min
    \left\{ \frac{1}{2}\left(\frac{1}{2} \|Bx-c\|^2-d \right)^2+\frac{1}{2}x^TAx-f^Tx  \; | \;\; x \in R^n \right\} ,
\end{eqnarray}
where $A$ is an $n\times n$ real symmetric matrix, $B\not=0$ is an
$m\times n$ real matrix, $c\in R^m$, $d \in R$,  and $f \in R^n$.
Typical examples with properly selected parameters of the objective
function are shown in Figure 1. The left most picture in Figure 1
is the simplest example with $n=1$ where there are two local energy
wells separated by one barrier. A higher dimensional analogy is
shown in the center picture of Figure 1. Note that the barrier in
this case is not a local maximum but a saddle point. The figure in
the right most is called the Mexican hat potential. It is created by
selecting a negative definite matrix $B$ and setting $A=0, f=0$ and
$ c=0$. It forms a ring-shaped region of infinitely many global
minima with one unique local maximum sitting in the center. Due to
the common feature shown in these illustrative examples, the
objective function is called a {\it double well potential function}
and the (DWP) model is referred to as the {\it double well potential
problem}.

\begin{figure}[!hbtp]
\begin{center}
\includegraphics[width=1.0\textwidth]{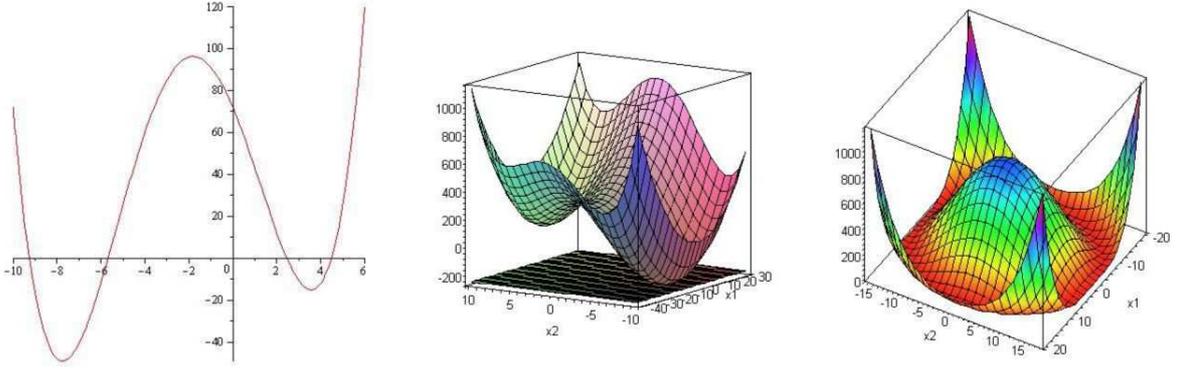}
\end{center}
\caption{Illustrative examples for the double well potential functions (DWP).}
\end{figure}

One motivation to investigate the (DWP) problem came from
  numerical approximations to the generalized Ginzburg-Landau
functionals \cite{Gao3}. The functionals often describe the total energy of a
ferroelectric system such as the ion-molecule reactions
\cite{brauman}. In a ferromagnetic spin system, the critical
phenomena and the phase transition is studied by the mean field
approach which also involves a double well potential
\cite{bodineau,kaski}. Other applications of the Ginzburg-Landau
functionals can be found in solid mechanics and quantum mechanics
\cite{Gao3, heuer}.

The mathematical formula of the generalized Ginzburg-Landau
functionals   takes the following general form
\cite{Gao3, Jerrard}:
\begin{eqnarray}\label{int}
I^{\alpha}(\mu)=\int_\Omega[\frac{1}{n}\|\nabla\mu(x)\|^n+\frac{\alpha}{2}(\frac{1}{2}
\|\mu(x)\|^2-\beta)^2]dx,
\end{eqnarray}
where $\Omega\subset R^n$, $\alpha,\beta$ are positive material
constants, and $\mu:\Omega\longrightarrow R^q$ is a smooth
vector-valued (field) function describing the phase (order) of the
system. It is known that, when $\alpha$ is sufficiently large (so
that the second term dominates), if the trace of $\mu$ on the
boundary $\partial \Omega$ is a function of non-zero Brouwer degree,
then the generalized Ginzburg-Landau functional $I^{\alpha}(\mu)$ is
bounded from below by $\ln \alpha$ \cite{Jerrard}. The second term
of (\ref{int}) is  actually the  double-well potential in the
integral form. Directly minimizing $I^{\alpha}(\mu)$ over any
reasonable functional space is, in general, very difficult. Hence
only the lower bound is estimated in the literature. We therefore
look into the discrete version of (\ref{int}) and it naturally leads
to a
special case of (\ref{DWP}). %Moreover, not only the global minimizer
%of $I^{\alpha}(\mu)$, but also any critical point of the system has
%a meaning in the applications. We are thus interested in all
%extremal points, including the local/global minimizers/maximizers
%and the saddle points of (\ref{DWP}). This is the reason we purpose
%to ``optimize'' $P(x)$, not just to ``minimize'' it. A thorough
%investigation into (\ref{DWP}) will lead to a complete understanding
%of the generalized Ginzburg-Landau system and the double-well
%potential as well.
%
%
%
%\section{Preliminary results of the proposed research}\label{sec0-1}

To illustrate how (\ref{int}) can be discretized into (\ref{DWP}),
we work out an example with $n=2$, $q=1$, and
$\Omega=\Omega_x\times\Omega_y=[0,1]\times[0,1]$. Let
$\{0=x_1<x_2<\ldots<x_{s+1}=1\}$ be a partition of $\Omega_x$ that
divides $[0,1]$ into $s$ subintervals of equal length. Similarly,
let $\{0=y_1<y_2<\ldots<y_{t+1}=1\}$ be the uniform grid of
$\Omega_y$. Define an $(s+1)\times(t+1)$ vector $e$ by
$$e=[e_{1,1},e_{2,1},\cdots,e_{s+1,1},
e_{1,2},e_{2,2},\cdots,e_{s+1,2},
\cdots,e_{1,t+1},e_{2,t+1},\cdots,e_{s+1,t+1}]^T,$$ where
$e_{i,j}=\mu(x_i,y_j)$. With the partition, we can approximate
$\nabla \mu$ by the first order difference and approximate
$I^{\alpha}(\mu)$ by the Riemann sum so that a discrete version of
(\ref{int}) becomes
\begin{align}
%\begin{eqnarray}\label{Riemann Sum}
%\begin{array}{ll}
%&
&\sum\limits_{i=1}^s\sum\limits_{j=1}^t\frac{1}{2}|(\frac{e_{i+1,j}-e_{i,j}}
{\frac{1}{s}})^2+(\frac{e_{i,j+1}-e_{i,j}}{\frac{1}{t}})^2|
\cdot\frac{1}{s}\frac{1}{t}+\sum\limits_{i=1}^s\sum\limits_{j=1}^t\frac{\alpha}{2}
(\frac{1}{2}e_{i,j}^2-\beta)^2\cdot\frac{1}{t}\frac{1}{s}\notag\\
%=&
&=\sum\limits_{i=1}^s\sum\limits_{j=1}^t\frac{s}{2t}(e_{i+1,j}-e_{i,j})^2+
\sum\limits_{i=1}^s\sum\limits_{j=1}^t\frac{t}{2s}(e_{i,j+1}-e_{i,j})^2+
\sum\limits_{i=1}^s\sum\limits_{j=1}^t\frac{\alpha}{2st}
(\frac{1}{2}e_{i,j}^2-\beta)^2\notag\\
%=&
&\hspace*{-2pt}\begin{array}{ll}
=&\displaystyle{\sum\limits_{i=1}^s\sum\limits_{j=1}^t}\frac{s}{2t}
(e_{i,j},e_{i+1,j})E(e_{i,j},e_{i+1,j})^T
+\sum\limits_{i=1}^s\sum\limits_{j=1}^t\frac{t}{2s}
(e_{i,j},e_{i,j+1})E(e_{i,j},e_{i+1,j})^T\\
&+\displaystyle{\sum\limits_{i=1}^s\sum\limits_{j=1}^t}\frac{\alpha}{2st}
(\frac{1}{2}e_{i,j}^2-\beta)^2\end{array}\label{Riemann Sum}
%\end{array}
%\end{eqnarray}
\end{align}
where
$E=\tiny{\left[\begin{array}{cc}1&-1\\-1&1\end{array}\right]}$.
The sum of quadratic forms in (\ref{Riemann Sum}) can be further
combined into a large quadratic form. Let
$\mathcal{B}_i=Diag(0_{i-1},E,0_{(s+1)(t+1)-i-1})$, where
$0_k$ is a $k\times k$ block matrix of $0$. Then,
\begin{eqnarray}\label{L1}
\begin{array}{l}
\sum\limits_{i=1}^s\sum\limits_{j=1}^t\frac{s}{2t}
(e_{i,j},e_{i+1,j})E(e_{i,j},e_{i+1,j})^T
=\frac{1}{2}e^T(\frac{s}{t}\sum\limits_{i\in \mathcal{T}}\mathcal{B}_i)e
\end{array}
\end{eqnarray}
where
$\mathcal{T}=\{1,2,3,\cdots,(s+1)t\}\backslash\{(s+1),2(s+1),3(s+1),\cdots,t(s+1)\}$.
Analogously, we can define $\mathcal{C}_i$ to be an
$(s+1)(t+1)\times(s+1)(t+1)$ matrix with the $(i,i)$ and
$(i+(s+1),i+(s+1))$ components being $1$, $(i,i+(s+1))$ and
$(i+(s+1),i)$ components being $-1$, and $0$ elsewhere. Then,
\begin{eqnarray}\label{L2}
\begin{array}{l}
\sum\limits_{i=1}^s\sum\limits_{j=1}^t\frac{t}{2s}
(e_{i,j},e_{i,j+1})E(e_{i,j},e_{i,j+1})^T
=\frac{1}{2}e^T(\frac{t}{s}\sum\limits_{i\in \mathcal{T}}\mathcal{C}_i)e.
\end{array}
\end{eqnarray}
For the third term in (\ref{Riemann Sum}), we have
\begin{eqnarray}\label{L2-2}
\begin{array}{l}
\sum\limits_{i=1}^s\sum\limits_{j=1}^t\frac{\alpha}{2st}
(\frac{1}{2}e_{i,j}^2-\beta)^2
=\frac{\alpha}{8st}\sum\limits_{i=1}^s\sum\limits_{j=1}^te_{i,j}^4
-\frac{\alpha\beta}{2st}\sum\limits_{i=1}^s\sum\limits_{j=1}^te_{i,j}^2
+\frac{\alpha\beta^2}{2}.
\end{array}
\end{eqnarray}
Since
\begin{eqnarray}\label{boundary term}
\begin{array}{l}
\sum\limits_{i=1}^s\sum\limits_{j=1}^te_{i,j}^2
=e^Te-\sum\limits_{i=1}^{s+1}e_{i,t+1}^2-\sum\limits_{j=1}^{t}e_{s+1,j}^2\le\|e\|^2
\end{array}
\end{eqnarray}
and %the two terms $\sum_{i=1}^{n+1}U_{i,m+1}^2$ and
%$\sum_{j=1}^{m}U_{n+1,j}^2$ are constants determined by the boundary
%conditions, we may, without loss of generality, write
%$\sum_{i=1}^n\sum_{j=1}^mu_{i,j}^2 =U^TU=\|U\|^2$ such that
\begin{eqnarray}\label{quadratic product}
\begin{array}{rl}
\sum\limits_{i=1}^s\sum\limits_{j=1}^te_{i,j}^4&
=\sum\limits_{i=1}^s\sum\limits_{j=1}^t(e_{i,j}^2)^2\\
&=(\sum\limits_{i=1}^s\sum\limits_{j=1}^te_{i,j}^2)^2
-\sum\limits_{(i,j)\neq(k,l)}e_{i,j}^2e_{k,l}^2\\
&\leq(\sum\limits_{i=1}^s\sum\limits_{j=1}^te_{i,j}^2)^2\\
%&=(U^TU-\sum_{i=1}^{n+1}U_{i,m+1}^2-\sum_{j=1}^{m}U_{n+1,j}^2)^2\\
%&\leq(U^TU)^2\\
&\le(\|e\|^2)^2,
\end{array}
\end{eqnarray}
the generalized Ginzburg-Landau functional $I^{\alpha}(u)$ in this
example has an estimated upper bound of
\begin{eqnarray}\label{upper bound}
\begin{array}{l}
\frac{\alpha}{8ts}(\|e\|^2)^2+
\frac{1}{2}e^T(\frac{s}{t}\sum\limits_{i\in \mathcal{T}}\mathcal{B}_i+\frac{t}{s}
\sum\limits_{i\in \mathcal{T}}\mathcal{C}_i-\frac{\alpha\beta}{ts}I)e
+\frac{\alpha\beta^2}{2},
\end{array}
\end{eqnarray}
which is of the form (\ref{DWP}) with $x=e$,
$B=(\frac{\alpha}{ts})^{\frac{1}{4}}I$,
$A=(\frac{s}{t}\sum\limits_{i\in
\mathcal{T}}\mathcal{B}_i+\frac{t}{s} \sum\limits_{i\in
\mathcal{T}}\mathcal{C}_i-\frac{\alpha\beta}{ts}I)$, $c=0$, $d=0$,
and $f=0$.

In this paper, we are aimed to categorize all possible
configurations and important features of the double well potential
functions defined by (\ref{DWP}). In part I of the paper, we shall
focus on finding the global minimum solution(s), %to (\ref{DWP}),
deriving the duality theorem, and analyzing the dual of the dual
problem. In part II, we shall study the local (non-global) extremum
solution(s) and prove that for the non-singular case, there is at
most one local non-global minimum point (namely, at most one local
non-global energy well) and at most one local maximum point (at most
one energy barrier). Moreover, the radius of the local maximizer is
always smaller than that of local/global minimizers, which proves
mathematically that the energy barrier (maximizer) is always
surrounded by other energy wells (minimizers). Combining the resutls
from both Part I and Part II, we conclude that, except for some
unbounded cases and singular cases (which can be easily analyzed),
the only non-trivial examples of the double well potential function
in (\ref{DWP}) are those illustrated by Figure 1.

\section{Space reduction and format setting}

Our approach to solving the global minimum solution of (\ref{DWP})
is via the canonical dual transformation, i.e., by introducing
geometrically nonlinear measure (Cauchy-Green type strain) $\xi(x):
R^n \rightarrow R$ defined by
\[
\xi=\frac{1}{2}(Bx-c)^T(Bx-c)-d.
\]
The fourth order polynomial optimization problem (DWP) is then
reduced into the following quadratic program with a single quadratic
equality constraint, called (QP1QC):
\begin{eqnarray}\label{primal0}
\begin{array}{rcl}
    \min && \Pi(x,\xi)=\frac{1}{2} \xi^2+\frac{1}{2}x^TAx-f^Tx\\
    s.t.&&   \xi   =\frac{1}{2}x^TB^TBx-c^TBx+\frac{1}{2}c^Tc-d\\
    &&\ \ x \in R^n, ~~ \xi \in R.
\end{array}
\end{eqnarray}
Notice that there exists a list of research work on solving (QP1QC).
In the rest of this paper, we extend the results of \cite{FLSX,
XFGSZ} to study the  problem (\ref{primal0}) in an explicit manner.

The problem (QP1QC) is a nonconvex optimization problem. It often
requires some dual information for providing a global lower bound in
order to determine the global minimum solution. Lagrange duality is
the most frequently used dual, but it imposes a serious restriction,
called the constraint qualification, on the type of nonconvex
optimization problems to apply. For other problems not even
satisfying any constraint qualification, they are often referred to
as the ``hard case'' in contrast to the easier ones at least with
some dual information to help.

In \cite{FLSX}, for solving a quadratic program with one quadratic
inequality constraint, the (dual) Slater constraint qualification is
relaxed to a more general condition called ``simultaneously
diagonalizable via congruence" (SDC in short). For the (DWP)
problem, the (SDC) condition amounts to the two matrices $A$ and
$B^TB$ are simultaneously diagonalizable via congruence. Namely,
there exists a nonsingular matrix $P$ such that both $P^TAP$ and
$P^TB^TBP$ become diagonal matrices. Unfortunately, for any given
double well potential problem (\ref{DWP}), $A$ and $B^TB$ may not
satisfy (SDC). For example,
$A=\tiny{\left[\begin{array}{cc}1&-1\\-1&0\end{array}\right]}$,
$B=\tiny{\left[\begin{array}{cc}1&-2\\3&-6\end{array}\right]}$ is
such an instance. In other words, some of the double well potential
problem belongs to the ``hard case''. Fortunately, we can show in
this section that $A$ and $B^TB$ of problem (\ref{primal0}) can be
always made to satisfy the (SDC) condition after performing the
following space reduction technique.

%After rearranging terms, the above problem becomes
%\begin{eqnarray}\label{primal}
%\begin{array}{rcl}
%    \min &&\mathrm{P}(t,x)=\frac{1}{2}\xi^2+\frac{1}{2}x^TAx-f^Tx\\
%    s.t.&&(t,x)\in S=\{(t,x)\in R\times R^n|g(t,x)=0\},
%\end{array}
%\end{eqnarray}
%where $g(t,x)=\frac{1}{2}x^TB^TBx-\xi - (B^Tc)^Tx-(d-\frac{1}{2}c^Tc)$,
%and we call (\ref{primal}) the (DWP-QP) problem in short.
%
%Observe that the constraint $g(t,x)$ is positive semi-definite, since we
%try to diagonalize $A$ and $B^TB$ simultaneously via congruence (SDC) to see
%more insights but difficult whether $B^TB$ has at least one zero eigenvalue.
%Hence we reduced the problem (\ref{primal}) into lower dimensional
%which is easily to diagonalize simultaneously via congruence.

Let $U=[u_1,u_2,\cdots,u_r]$ be a basis for the null space of $B$.
First we extend $U$ to a nonsingular $n\times n$ matrix $[U,V]$ such
that $BU=0$ and each $x\in R^n$ can be split as $x=Uy+Vz$ with $y\in
R^r$ and $z\in R^{n-r}$. Then, in terms of variables $\xi$, $y$ and $
z$, the problem (\ref{primal0}) becomes
\begin{align}
&\left\{
\begin{array}{ll}
\displaystyle\min_{\xi, y,z}&\frac{1}{2}\xi^2+\frac{1}{2}(y^T,z^T)
[U,V]^TA[U,V](y^T,z^T)^T-f^T[U,V](y^T,z^T)^T\\
\mbox{s.t.}&
\frac{1}{2}(y^T,z^T)[U,V]^TB^TB[U,V](y^T,z^T)^T
-  \xi -(B^Tc)^T[U,V](y^T,z^T)^T
-(d-\frac{1}{2}c^Tc)=0.
\end{array}
\right.\notag\\
\end{align}
Equivalently,
\begin{align}
 \left\{
\begin{array}{ll}
\displaystyle\min_{\xi, y,z}&\frac{1}{2}\xi^2
+\frac{1}{2}y^TA_{uu}y+\frac{1}{2}z^TA_{vv}z+y^TA_{uv}z-f^TUy-f^TVz\\
\mbox{s.t.}&
\frac{1}{2}z^TB_{vv}z- \xi -(B^Tc)^TVz-(d-\frac{1}{2}c^Tc)=0,
\end{array}
\right.\label{programiny}
\end{align}
where $A_{uu}=U^TAU,A_{vv}=V^TAV,A_{uv}=U^TAV=A_{vu}^T$, $
B_{uu}=U^TB^TBU$,
 $B_{vv}=V^TB^TBV$ and $B_{uv}=U^TB^TBV=B_{vu}^T.$
Notice that the variable splitting ends up with the positive
definiteness of matrix $B_{vv}$ and the elimination  of variable $y$
in  the constraint of (\ref{programiny}). As the result, we can
solve $y$ first with the variables $\xi$ and $z$ fixed. It
amounts to writing (\ref{programiny}) as the following two-level
optimization problem:
\begin{eqnarray}\label{primal002}
\min_{(\xi, z)\in \calE  }
\{\frac{1}{2}\xi^2+\frac{1}{2}z^TA_{vv}z-f^TVz +
\min\limits_{y\in\mathbb{R}^r}\ \{\frac{1}{2}y^TA_{uu}y+y^TA_{uv}z-f^TUy \}\},
\end{eqnarray}
where $\calE  =\{(\xi, z) \in R \times R^{n-r} \vert
\frac{1}{2}z^TB_{vv}z-\xi - (B^Tc)^TVz-(d-\frac{1}{2}c^Tc)=0\}$.

Clearly, if $A_{uu}$ is not positive semi-definite
or if $A_{uu}$ is a zero matrix but $A_{uv}z-U^Tf\neq0$ for some $(\xi,z)\in\mathcal{E}$,
we can immediately conclude that the problems (\ref{primal002}) and  (DWP)
are both unbounded below.
When $A_{uu}=0$ and $A_{uv}z-U^Tf=0,\forall (\xi,z)\in\mathcal{E}$, problem
(\ref{primal002}) is reduced to
\begin{eqnarray}\label{primal003}
&\left\{
\begin{array}{ll}
\displaystyle{\min_{\xi, z}}&\frac{1}{2}\xi^2+\frac{1}{2}z^TA_{vv}z-f^TVz\\
\mbox{s.t.}&\frac{1}{2}z^TB_{vv}z-\xi - (B^Tc)^TVz-(d-\frac{1}{2}c^Tc)=0,
\end{array}
\right.
\end{eqnarray}
which is the format of (\ref{primal0}) with $B_{vv}\succ0$ on a lower dimensional space.

Suppose $A_{uu}\succeq0$ with at least one positive
eigenvalue, and $(\xi, z)\in\cal{E}$. Then, the optimal
solution $y^*$ that solves
\begin{eqnarray}\label{miny}
\min_{y \in R^r}\ \frac{1}{2}y^TA_{uu}y+y^TA_{uv}z-f^TUy
\end{eqnarray}
must satisfy $A_{uu}y+A_{uv}z-U^Tf=0$.
%If $A_{uu}\succ0$, the
%optimal solution to (\ref{miny}),
%\begin{eqnarray}\label{y*case1}
%y^*=-A_{uu}^{-1}U^T(AVz-f),
%\end{eqnarray}
%is unique and the optimal value is
%$$-\frac{1}{2}(AVz-f)^TUA_{uu}^{-1}U^T(AVz-f).$$
%\begin{eqnarray*}
%\begin{array}{l}
%\frac{1}{2}(y^*)^TA_{uu}y^*+(y^*)^TA_{uv}z-f^TUy^*\\
%=\frac{1}{2}(AVz-f)^TUA_{uu}^{-1}U^T(AVz-f)
%-(AVz-f)^TU^TA_{uu}^{-1}A_{uv}z
%+f^TUA_{uu}^{-1}U^T(AVz-f)\\
%=-\frac{1}{2}(AVz-f)^TUA_{uu}^{-1}U^T(AVz-f).
%\end{array}
%\end{eqnarray*}
%Otherwise,
Assume that $W$ is the null space of $A_{uu}$ and $W$ is of
$k$-dimensional. Then, the optimal solution for (\ref{miny}) can be
expressed as
\begin{eqnarray}\label{y*case2}
y^*(\beta)=-A_{uu}^+U^T(AVz-f)+W\beta,\ \forall\beta\in R^k,
\end{eqnarray}
where $A_{uu}^+$ is the Moore-Penrose pseudoinverse of $A_{uu}$, and
$A_{uu}^+=A_{uu}^{-1}$ when $A_{uu}\succ0$.
%$A_{uu}W=0$ and $A_{uu}y+A_{uv}z-U^Tf=0$, we have $W^TU^T(AVz-f)=0$.
%With the similar argument as Case1,
Then the optimal value of (\ref{miny}) becomes
$-\frac{1}{2}(AVz-f)^TUA_{uu}^+U^T(AVz-f).$
%\begin{eqnarray*}
%\begin{array}{l}
%\frac{1}{2}(y^*)^TA_{uu}y^*+(y^*)^TA_{uv}z-f^TUy^*\\
%=-\frac{1}{2}(AVz-f)^TUA_{uu}^+U^T(AVz-f),
%\end{array}
%\end{eqnarray*}
%which is independent to $\beta$.
Consequently, (\ref{primal002}) becomes
\begin{eqnarray}\label{programinz}
\left\{
\begin{array}{ll}
\displaystyle{\min_{\xi, z}}&\frac{1}{2}\xi^2+\frac{1}{2}z^TA_{vv}z-f^TVz
-\frac{1}{2}(AVz-f)^TUA_{uu}^{+}U^T(AVz-f)\\
\mbox{s.t.}&\frac{1}{2}z^TB_{vv}z-\xi - (B^Tc)^TVz-(d-\frac{1}{2}c^Tc)=0.
\end{array}
\right.
\end{eqnarray}
%where $A_{uu}^{+}=A_{uu}^{-1}$ when $A_{uu}$ is positive definite.
Simplifying the expressions, we can write
%
%
%The objective function of (\ref{programinz}) is equivalent to
%%$\begin{array}{l}
%%\frac{1}{2}\xi^2+\frac{1}{2}z^TA_{vv}z-f^TVz
%%-\frac{1}{2}z^TA_{vu}A_{uu}^{-1}A_{uv}z
%%-\frac{1}{2}f^TUA_{uu}^{-1}U^Tf
%%+\frac{1}{2}z^TA_{vu}A_{uu}^{-1}U^Tf
%%+\frac{1}{2}f^TUA_{uu}^{-1}A_{uv}z\\
%%=\frac{1}{2}\xi^2+\frac{1}{2}z^TA_{vv}z
%%-\frac{1}{2}z^TA_{vu}A_{uu}^{-1}A_{uv}z
%%-f^TA_0^TVz
%%-\frac{1}{2}f^TUA_{uu}^{-1}U^Tf\\
%%=
%$$\frac{1}{2}\xi^2+\frac{1}{2}z^TV^T\tilde{A}Vz -\tilde{f}^TVz
%-\frac{1}{2}f^TUA_{uu}^{-1}U^Tf,$$
%%\end{array}$
%where $\tilde{A}=(I-AUA_{uu}^{-1}U^T)A$,
%and $\tilde{f}=(I-AUA_{uu}^{-1}U^T)f$.
%
%Thus (\ref{programinz}) can rewrite as
%\begin{eqnarray}\label{lowdim}
%&\left\{
%\begin{array}{ll}
%\displaystyle{\min_{\xi, z}}&\frac{1}{2}\xi^2+\frac{1}{2}z^TV^T\tilde{A}Vz
%-\tilde{f}^TVz
%-\frac{1}{2}f^TUA_{uu}^{-1}U^Tf\\
%\mbox{s.t.}&\frac{1}{2}z^TB_{vv}z-\xi - (B^Tc)^TVz-(d-\frac{1}{2}c^Tc)=0.
%\end{array}
%\right.
%\end{eqnarray}
%
%Similarly, in Case2,
(\ref{programinz}) as
\begin{eqnarray}\label{lowdim2}
&\left\{
\begin{array}{ll}
\displaystyle{\min_{\xi, z}}&\frac{1}{2}\xi^2+\frac{1}{2}z^TV^T\hat{A}Vz
-\hat{f}^TVz
-\frac{1}{2}f^TUA_{uu}^+U^Tf\\
\mbox{s.t.}&\frac{1}{2}z^TB_{vv}z-\xi - (B^Tc)^TVz-(d-\frac{1}{2}c^Tc)=0,
\end{array}
\right.
\end{eqnarray}
where $\hat{A}=(I-AUA_{uu}^+U^T)A$, $\hat{f}=(I-AUA_{uu}^+U^T)f$,
and $B_{vv}\succ0$. Combining (\ref{primal003}) and (\ref{lowdim2}),
we may simply assume that $B^TB$ is positive definite in (\ref{primal0})
throughout the paper.

Performing Cholesky factorization on the positive definite
$B^TB$ matrix, we have a nonsingular
lower-triangular matrix $P_1$ such that $P_1^T(B^TB)P_1=I$. Since
$A$ and $P_1^T{A}P_1$ are symmetric, there is an orthogonal matrix
$P_2$ such that $P_2^TP_1^T{A}P_1P_2=
Diag(\alpha_1,\alpha_2,\cdots,\alpha_{n})$ is a diagonal matrix and
$P_2^TP_1^T(B^TB)P_1P_2=I$. In other words, ${A}$ and $B^TB$
satisfies the (SDC) condition if $B^TB\succ0$.

Let $P=P_1P_2$ and define $w=P^{-1}x, \psi=P^Tf, \varphi=P^TB^Tc,
\nu=d-\frac{1}{2}c^Tc$. Problem (\ref{primal0}) can be written as
the sum of separated squares in the following form:
\begin{eqnarray}\label{primal}
\begin{array}{lll}
(P)~~~P_0=&\min&\ \Pi(\xi, w)=\frac{1}{2}\xi^2+\sum_{i=1}^{n}(\frac{1}{2}
\alpha_iw_i^2
-\psi_iw_i)\\
&\mbox{s.t.}&(\xi, w)\in \calE_a =\{(\xi, w)\in R\times R^{n}| \;\;
\xi = \Lambda (w)   \},
\end{array}
\end{eqnarray}
where $\Lambda(w)=\sum_{i=1}^{n}(\frac{1}{2}w_i^2-\varphi_iw_i)-  \nu$
is the standard geometrically nonlinear (quadratic in this case) operator in the canonical duality theory.
We shall call (\ref{primal}) the canonical primal problem $(P)$, since it is the
main form that we deal with in this paper.

Let $\sigma\in R$ be the Lagrange multiplier corresponding to the constraint
$\xi-\Lambda(w)=0$ in the canonical
primal problem (\ref{primal}). The Lagrange function becomes
$$L(\xi, w, \sigma)=\Pi(\xi,w)+\sigma(\Lambda(w)-\xi)=\frac{1}{2}\xi^2
+\sum\limits_{i=1}^n(\frac{1}{2}(\alpha_i+\sigma)w_i^2-(\psi_i+\sigma\varphi_i)w_i)-\sigma\xi-\sigma\nu.$$

\noindent When $\sigma \in \calS_a^+ = (\sigma_0,+\infty)$ with
$\sigma_0=\max\{-\alpha_1,-\alpha_2,\cdots,-\alpha_n\}$,
%\{ \sigma \in R | \; \alpha_i+\sigma>0,\ \forall
%i=1,2,\ldots,n \}$,
$L(\xi,w,\sigma)$ is convex in $(\xi,w)$.
The unique global minimum of $L(\xi,w,\sigma)$, denoted by $(\xi(\sigma),w(\sigma))$,
is attained at $w(\sigma)_i=\frac{\psi_i+\sigma\varphi_i}{\alpha_i+\sigma}$ and
$\xi(\sigma)=\sigma$. The dual problem of (P) is thus formulated as
\begin{equation}\label{dual}
(D)~~~\Pi_0^d= \sup_{\sigma\in\calS^+_a} \left\{
\Pi^d(\sigma)= -\frac{1}{2}\sigma^2
-\frac{1}{2}\sum_{i=1}^n\frac{(\psi_i+\sigma\varphi_i)^2}{\alpha_i+\sigma}
-\nu\sigma\right\} .
\end{equation}
It is
clear that this canonical  dual is a concave maximization problem on a convex feasible space
$\calS^+_a$.

% $P_0^d>-\infty$ and  the weak duality \[
% \Pi(w, \xi)\geq \Pi^d(\sigma) \;\; \forall (\xi, w)\in \calE_a, \;\; \forall \sigma\in \calS^+_a
% \] holds.

\begin{Remark}
\em{
In Gao-Strang \cite{gao-strang89}, $L(\xi,w,\sigma)$ is called the
\emph{pseudo-Lagrangian} associated with the
canonical primal problem $(P)$.
Since $L(\xi, w, \sigma)$ is convex in $\xi$ for any given $(w, \sigma)$,
the total complementary function
$\Xi(w, \sigma)$ can be obtained as
\begin{equation}
\Xi(w, \sigma) = \inf_{\xi \in R} L(\xi, w, \sigma) =
- \frac{1}{2}\sigma^2+\sum_{i=1}^{n}(\frac{1}{2}
(\alpha_i+\sigma)w_i^2-(\psi_i+\sigma\varphi_i)w_i) -\sigma\nu.
\end{equation}
By the canonical duality theory \cite{gao-book00},
 the canonical dual function can be defined by
\[
\Pi^d(\sigma) = \inf \{ \Xi(w, \sigma) | \; w \in R^n \},
\]
which is the same as (\ref{dual}) and is called the total
complementary energy.

In finite deformation theory, if the quadratic operator $\Lambda(w) $
represents a Cauchy-Green strain measure and
$\alpha_i = 0 \; \forall i \in \{1, \dots, n\}$, the canonical primal function $\Pi(\xi,w)$ is the
\emph{total potential energy} (see Equation (13) in \cite{gao-strang89}).
The total complementary function $\Xi(w,\sigma)$ then leads to
the well-known \emph{Hellinger-Reissner generalized complementary energy},
and the pseudo-Lagrangian is the \emph{Hu-Washizu generalized potential energy},
proposed independently  by Hu  Hai-Chang \cite{hu-55}  and K. Washizu \cite{wash} in 1955
   (see Chapter 6.3.3 in \cite{gao-book00}).
   The extremality of  these functions and
    the existence of a \emph{total complementary energy}
   $\Pi^d(\sigma)$ as the canonical dual to $\Pi(\xi,w)$
   have been debated in the community of theoretical and applied
    mechanics for several decades  (see \cite{li-gupta}).
    Gao and Strang \cite{gao-strang89} revealed the extremality relations among these functions,
    and the term
    \[
    G_{ap}(w, \sigma) = \sum_{i=1}^{n} \frac{1}{2} (\alpha_i+\sigma)w_i^2
\]
is called the \emph{complementary gap function}.
Their general  global sufficient condition $G_{ap}(w, \sigma) \ge 0, \;\;\forall w \in R^n$
leads to the canonical dual feasible space $\calS^+_a$.
 The result has been generalized to
the cases when $\alpha_i \neq 0, \; \forall i \in \{1, \dots, n\}$ in \cite{gao-amr,gao-opt03}.
The total complementary energy function $\Pi^d(\sigma)$ was first formulated in
nonlinear post-bifurcation analysis \cite{gao-amr},
 where the total potential energy $\Pi(\xi,w)$ is a double-well
functional.  The triality theory proposed in \cite{gao-amr}
can be used to identify both
global and local extrema.}
\end{Remark}

%\proof
%By the Lagrangian function (\ref{Lagrangian}) restricted on
%the region ${\cal F}$ and by the notions $x(\sigma)$ and $\xi(\sigma)$
%in (\ref{x_sigma}) and in (\ref{t_sigma}), we have for any $(\xi, x)\in
%S$ and $\sigma\in{\cal F}$
%$$
%\begin{array}{ll}
%L(\xi, x,\sigma)&\ge\ %\min_{t\in R,x\in R^n}
%{1\over2}\xi(\sigma)^2+{1\over2}x(\sigma)^T(A+\sigma B^TB)x(\sigma)
%-(f+\sigma B^Tc)^Tx(\sigma)-\xi(\sigma)\sigma-(d-{1\over2}c^Tc)\sigma\\
%&=\ -{1\over2}\sigma^2
%-{1\over2}(f+\sigma B^Tc)^T(A+\sigma B^TB)^{-1}(f+\sigma B^Tc)
%-(d-{1\over2}c^Tc)\sigma\\
%&=\Pi^d(\sigma).
%\end{array}
%$$
%Hence, the following weak duality
%The Lagrangian function of the system leading to the following
%inequality
%\begin{eqnarray}\label{Px>=Pd}
%\mathrm{P}(\xi, z)=\sup_{\sigma\in R}L(\xi, z,\sigma)
%\geq\sup_{\sigma\in\mathcal{D}}L(\xi, z,\sigma)\geq \Pi^d_0\geq \Pi^d(\sigma),
%\end{eqnarray}
%where $L(\xi, z,\sigma)=P(\xi, z)+\sigma g^*(\xi, z)$.
%The inequality say that the weak duality holds.
%Furthermore, $\Pi^d_0>-\infty$ since $\mathcal{D}\neq\emptyset$;
%and $\Pi^d<+\infty$ by the fact that $S^*$ in (\ref{primal'}) is non-empty.
%\hfill\eproof

\section{Global minimum solution to the (DWP) problem}\label{sec2}

%Notice that for any $\sigma\in(\sigma_0,+\infty)$, the Lagrange
%function $L_p(\xi, w;\sigma)$ is minimized at $\xi(\sigma)=\sigma$ and
%$w(\sigma)_i=\frac{\psi_i+\sigma\varphi_i}{\alpha_i+\sigma}$.
%According to the saddle point theorem (e.g., Theorem 6.2.5 in
% \cite{Bazaraa}), $(\xi(\sigma),w(\sigma);\sigma)$ is a saddle point
% such that $(\xi(\sigma),w(\sigma))$ and $\sigma$ are optimal,
% respectively, to $(P)$ and $(D)$ if and only if $g(\xi(\sigma),w(\sigma))=0.$

By the equation (\ref{dual}), we have
$\lim\limits_{\sigma\rightarrow+\infty}(-\frac{1}{2}\sigma^2-\nu\sigma)=-\infty$
and
$-\frac{1}{2}\sum_{i=1}^n\frac{(\psi_i+\sigma\varphi_i)^2}{\alpha_i+\sigma}<0$.
Hence $\Pi^d(\sigma)\rightarrow-\infty$ as
$\sigma\rightarrow+\infty$. In other words, the supremum of the dual
problem may occur either at $\sigma^*\in(\sigma_0,+\infty)$, or at
$\sigma^*=\sigma_0$ such that
$\Pi^d_0=\lim_{\sigma\rightarrow\sigma_0^+}{\Pi^d(\sigma)}$.
But the supremum never occurs asymptotically as
$\sigma\rightarrow+\infty$.

If the dual optimal value $\Pi^d_0$ is attained at
$\sigma^*\in(\sigma_0,+\infty)$, it is necessary that
$\frac{d\Pi^d(\sigma^*)}{d\sigma}=0$. Notice that
\begin{eqnarray}\label{diff}
\begin{array}{ll}
    {d\Pi^d(\sigma)\over d\sigma}
    &=\sum\limits_{i=1}^n(\frac{1}{2}w(\sigma)_i^2-\varphi_iw(\sigma)_i)-\xi(\sigma)-\nu\\
    &=g(\xi(\sigma),w(\sigma)),\ \ \forall~~ \sigma\in(\sigma_0,\infty),
\end{array}
\end{eqnarray}
where $g(\xi(\sigma),w(\sigma)) = \Lambda(w(\sigma)) - \xi(\sigma)$, it implies that the vector
$(\xi(\sigma^*),w(\sigma^*),\sigma^*)$ must be a saddle point of $L(\xi,w,\sigma)$ such
that the primal problem $(P)$ is solved by
$(\xi(\sigma^*), w(\sigma^*))$, see \cite{Bazaraa}. In this case, $x^*=Pw(\sigma^*)$ solves
the (DWP) problem with the optimal value
$\frac{1}{2}(\sigma^*)^2+\sum_{i=1}^n(\frac{1}{2}\alpha_i
w(\sigma^*)_i^2-\psi_iw(\sigma^*)_i)$.

Otherwise, the supremum value
$\Pi^d_0=\lim_{\sigma\rightarrow\sigma_0^+}{\Pi^d(\sigma)}>-\infty$ is attained at $\sigma_0$
and $\frac{d\Pi^d(\sigma_0)}{d\sigma}\leq0$.
Let the index set $I=\{i|\alpha_i+\sigma_0=0\}\neq\emptyset$
and $\alpha_j+\sigma_0>0$ for $j\in J=\{1,2,\ldots,n\}\setminus I$.
Since
$\Pi^d_0>-\infty$, we know from (\ref{dual}) that $\psi_i+\sigma_0\varphi_i=0$
for $i\in I$ and
%\begin{center}
\begin{eqnarray}\label{case_2-3}
w(\sigma_0)_i:=\lim\limits_{\sigma\rightarrow\sigma_0^+}w(\sigma)_i=
\lim\limits_{\sigma\rightarrow\sigma_0^+}\frac{\psi_i+\sigma\varphi_i}{\alpha_i+\sigma}
&=&\left\{\begin{array}{ll}\varphi_i,& \textnormal{if}\ i\in I,\\
\frac{\psi_i+\sigma_0\varphi_i}{\alpha_i+\sigma_0},&\textnormal{if}\
i\in J. \end{array}\right.
\end{eqnarray}
The following theorem characterizes the global optimal solution set of ($P$) in this case.

\begin{thm}\label{boundarification}
If the supremum $\Pi^d_0$ is attained when $\sigma$
approaches $\sigma_0$,
then the global optimal solution set of the problem  $(P)$ should satisfy
\begin{eqnarray*}\label{case 2}
& &  w^*_j  =  \frac{\psi_j+\sigma_0\varphi_j}{\alpha_j+\sigma_0},~\mbox{for}~j\in
J,\\
& &
 \sum\limits_{i\in
I}(\frac{1}{2}(w_i^*)^2-\varphi_iw_i^*) =-\sum\limits_{j\in
J}(\frac{1}{2}(w_j^*)^2-\varphi_jw_j^*) +\sigma_0+\nu  .
\end{eqnarray*}
\end{thm}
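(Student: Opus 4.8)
The plan is to establish strong duality between $(P)$ and $(D)$ at the boundary multiplier $\sigma_0$, and then to read off the two asserted conditions as the characterization of the feasible minimizers of the pseudo-Lagrangian $L(\xi,w,\sigma_0)$.

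First I would record weak duality at $\sigma=\sigma_0$. For any feasible $(\xi,w)\in\calE_a$ the penalty term $\sigma_0(\Lambda(w)-\xi)$ vanishes, so $\Pi(\xi,w)=L(\xi,w,\sigma_0)\ge\inf_{(\xi,w)}L(\xi,w,\sigma_0)=\Pi^d(\sigma_0)$; and $\Pi^d(\sigma_0)=\Pi^d_0$ because, using $\psi_i+\sigma_0\varphi_i=0$ for $i\in I$, each $i\in I$ summand in the dual objective tends to $0$ as $\sigma\to\sigma_0^+$. This yields $P_0\ge\Pi^d_0$. Next I would pin down the minimizers of $L(\cdot,\cdot,\sigma_0)$: substituting $\alpha_i+\sigma_0=0$ and $\psi_i+\sigma_0\varphi_i=0$ for $i\in I$, the Lagrangian loses all dependence on the coordinates $w_i$, $i\in I$, while remaining strictly convex in $\xi$ and in $w_j$, $j\in J$. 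Its minimum is therefore attained exactly at $\xi=\sigma_0$, $w_j=\frac{\psi_j+\sigma_0\varphi_j}{\alpha_j+\sigma_0}$, with the $w_i$, $i\in I$, free. This already delivers the first asserted equation together with $\xi^*=\sigma_0$, once we know an optimal primal point minimizes $L$.

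The crux of the argument, and the step I expect to be the main obstacle, is closing the duality gap by producing a feasible minimizer of $L$. Such a point has $\xi=\sigma_0$ and $w_j$ as above, so feasibility $\xi=\Lambda(w)$ reduces to solving $\sum_{i\in I}(\frac{1}{2}w_i^2-\varphi_iw_i)=R$ in the free coordinates, where $R:=\sigma_0+\nu-\sum_{j\in J}(\frac{1}{2}(w^*_j)^2-\varphi_jw^*_j)$. Since the left-hand side sweeps out $[-\frac{1}{2}\sum_{i\in I}\varphi_i^2,+\infty)$, solvability is equivalent to the single inequality $R\ge-\frac{1}{2}\sum_{i\in I}\varphi_i^2$. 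I would obtain this from the hypothesis $\frac{d\Pi^d(\sigma_0)}{d\sigma}\le 0$: using $\psi_i=-\sigma_0\varphi_i$ and $\alpha_i=-\sigma_0$ one checks that $w(\sigma)_i\equiv\varphi_i$ for every $i\in I$ and every $\sigma>\sigma_0$, so the derivative formula (\ref{diff}), in the limit $\sigma\to\sigma_0^+$, collapses to precisely $-\frac{1}{2}\sum_{i\in I}\varphi_i^2-R\le 0$. Hence a feasible minimizer exists and $P_0=\Pi^d_0$.

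Finally, with strong duality in hand, any global minimizer $(\xi^*,w^*)$ of $(P)$ satisfies $\Pi(\xi^*,w^*)=P_0=\Pi^d_0=L(\xi^*,w^*,\sigma_0)$ and therefore attains the minimum of $L(\cdot,\cdot,\sigma_0)$; the minimizer description forces $w^*_j=\frac{\psi_j+\sigma_0\varphi_j}{\alpha_j+\sigma_0}$ for $j\in J$ and $\xi^*=\sigma_0$, and substituting $\xi^*=\sigma_0$ into the feasibility identity $\xi^*=\Lambda(w^*)$ rearranges into the second asserted equation. The only genuine subtlety beyond bookkeeping is the verification of $R\ge-\frac{1}{2}\sum_{i\in I}\varphi_i^2$ from the boundary first-order condition, which is exactly what makes the hypothesis $\frac{d\Pi^d(\sigma_0)}{d\sigma}\le 0$ indispensable.
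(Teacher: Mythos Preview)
Your argument is correct and closely parallels the paper's, though you frame it through Lagrangian duality while the paper works directly on the primal. The paper substitutes the constraint expression for $\sum_{i\in I}(\tfrac12 w_i^2-\varphi_i w_i)$ into the objective, which collapses $(P)$ to the unconstrained convex problem (\ref{sp-2}) in $(\xi,\{w_j\}_{j\in J})$---this is exactly your $L(\xi,w,\sigma_0)$ with the free $I$-coordinates suppressed---and then reads off $\xi^*=\sigma_0$ and $w_j^*$ from its unique minimizer, recovering the $I$-coordinates from the constraint. The one substantive difference is that you verify explicitly, via $\lim_{\sigma\to\sigma_0^+}\frac{d\Pi^d}{d\sigma}\le 0$ and the neat identity $w(\sigma)_i\equiv\varphi_i$ for $i\in I$, that the constraint is actually solvable in the $I$-coordinates at that minimizer (equivalently, $R\ge -\tfrac12\sum_{i\in I}\varphi_i^2$); the paper's proof asserts the ``equivalence'' with (\ref{sp-2}) without this check and supplies it only in the discussion after the proof, around (\ref{case 2-4})--(\ref{case 2-5}). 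So your route is a bit more self-contained and delivers strong duality $P_0=\Pi^d_0$ as a by-product, while the paper's is shorter once one grants that solvability step.
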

\proof We first rewrite the problem $(P)$ in terms of the index sets
$I$ and $J$ as follows:
\begin{eqnarray*}%\label{sp-1}
\begin{array}{rl}
\min&\frac{1}{2}\xi^2
+\sum_{i\in I}(\frac{1}{2}\alpha_iw_i^2-\psi_iw_i)
+\sum_{j\in J}(\frac{1}{2}\alpha_jw_j^2-\psi_jw_j)\\
s.t.&\sum_{i\in I}(\frac{1}{2}w_i^2-\varphi_iw_i)
+\sum_{j\in J}(\frac{1}{2}w_j^2-\varphi_jw_j)
-\xi - \nu=0.
\end{array}
\end{eqnarray*}
Since $\alpha_i+\sigma_0=\psi_i+\sigma_0\varphi_i=0,\ \forall i\in
I$, the problem (P) becomes
\begin{eqnarray*}
\begin{array}{rl}
\min&\frac{1}{2}\xi^2
-\sigma_0\sum_{i\in I}(\frac{1}{2}w_i^2-\varphi_iw_i)
+\sum_{j\in J}(\frac{1}{2}\alpha_jw_j^2-\psi_jw_j)\\
s.t.& \sum_{i\in I}(\frac{1}{2}w_i^2-\varphi_iw_i)
=-\sum_{j\in J}(\frac{1}{2}w_j^2-\varphi_jw_j)
+\xi+\nu,
\end{array}
\end{eqnarray*}
which is equivalent to the following unconstrained convex problem
\begin{eqnarray}\label{sp-2}
\begin{array}{rl}
\min\limits_{(\xi, w) \in R\times R^n}&\frac{1}{2}\xi^2
-\sigma_0\xi+\sum\limits_{j\in
J}\left[\frac{1}{2}(\alpha_j+\sigma_0)w_j^2
-(\psi_j+\sigma_0\varphi_j)w_j\right] -\sigma_0\nu,
\end{array}
\end{eqnarray}
because $\alpha_j+\sigma_0>0$ for $j\in J$. Moreover, solving
(\ref{sp-2}) leads to the global optimal solutions of $(P)$ with
those $(\xi^*,w^*)$ such that
\begin{eqnarray}\label{case 2-2}
\xi^*=\sigma_0,\
w^*_j=w(\sigma_0)_j=\frac{\psi_j+\sigma_0\varphi_j}{\alpha_j+\sigma_0},\ j\in J,
\end{eqnarray}
and for $i\in I$,
\begin{eqnarray}\label{case 2-1}
\sum\limits_{i\in I}(\frac{1}{2}(w_i^*)^2-\varphi_iw_i^*)
=-\sum\limits_{j\in J}(\frac{1}{2}(w_j^*)^2-\varphi_jw_j^*)
+\sigma_0+\nu.
\end{eqnarray}
The corresponding optimal value becomes
$$-\frac{1}{2}\sigma_0^2
-\frac{1}{2}\sum_{j\in J}\frac{(\psi_j+\sigma_0\varphi_j)^2}{\alpha_j+\sigma}
-\sigma_0\nu.$$
\hfill\eproof

Suppose $I=\{1,2,3,\ldots,k\}$ and rewrite (\ref{case 2-1}) as
$$\sum\limits_{i=1}^k(w_i^*-\varphi_i)^2
=\sum\limits_{i=1}^k\varphi_i^2
-\sum\limits_{j=k+1}^n[(w_j^*)^2-2\varphi_jw_j^*]+2\sigma_0+2\nu$$
where $w_j,j=k+1,k+2,\ldots,n$ are defined by (\ref{case 2-2}).
In the case when
$\lim\limits_{\sigma\rightarrow\sigma_0^+}\frac{d\Pi^d(\sigma)}{d\sigma}=g(\xi(\sigma_0),w(\sigma_0))<0$,
we have from (\ref{case_2-3}) that
\begin{eqnarray}\label{case 2-4}
0=\sum\limits_{i=1}^k(\varphi_i-\varphi_i)^2
<\sum\limits_{i=1}^k\varphi_i^2
-\sum\limits_{j=k+1}^n[(w_j^*)^2-2\varphi_jw_j^*]+2\sigma_0+2\nu.
\end{eqnarray}
In other words, the optimal solution set is a sphere centered at
$(\varphi_1,\varphi_2,\ldots,\varphi_k)$ with a positive radius of
$\{\sum\limits_{i=1}^k\varphi_i^2
-\sum\limits_{j=k+1}^n[(w_j^*)^2-2\varphi_jw_j^*]+2\sigma_0+2\nu\}^{1/2}$.
On the other hand, when $\lim\limits_{\sigma\rightarrow\sigma_0^+}\frac{d\Pi^d(\sigma)}{d\sigma}=0,$
(\ref{case 2-4}) becomes
\begin{eqnarray}\label{case 2-5}
0=\sum\limits_{i=1}^k\varphi_i^2
-\sum\limits_{j=k+1}^n[(w_j^*)^2-2\varphi_jw_j^*]+2\sigma_0+2\nu,
\end{eqnarray}
which degenerates the optimal solution set of ($P$) to a singleton
since $\sum\limits_{i=1}^k(w_i^*-\varphi_i)^2=0$ forces that
$w_i^*=\varphi_i,\ \forall i=1,2,\ldots,k.$ In the former case
(\ref{case 2-4}),
$w(\sigma_0)=\lim\limits_{\sigma\rightarrow\sigma_0^+}w(\sigma)$ is
not an optimal solution since it locates right at the center of the
sphere. The boundarification technique developed in \cite{FLSX} may
move $w(\sigma_0)$ from the center to the boundary of the sphere
along a null space direction of
$Diag(\alpha_1,\alpha_2,\cdots,\alpha_n)+\sigma_0I$ in order to
solve the primal problem $(P)$. In the latter case (\ref{case 2-5}),
the sphere degenerates to only its center and the optimal solution
of ($P$) is unique which is exactly
$w(\sigma_0)=\lim\limits_{\sigma\rightarrow\sigma_0^+}w(\sigma).$

%%%%%%%%%%%%%%%%%%%%%%%%%%%%%%%%%%%%%%%%%% begin convexification
\section{Dual of the Dual Problem}
By the fact that the canonical dual problem $(D)$ is a concave maximization over a convex
feasible space $\calS^+_a$, the inequality constraints in $\calS^+_a$ can be relaxed by
the traditional Lagrange multiplier method.
In this section, we show that the dual of the dual problem $(D)$ reveals the hidden
convex structure of $(P)$ in (\ref{primal}). This concept of hidden convexity can be referred to
\cite{BT,FLSX}
for different forms of the primal problem.

%will lead to
%a convex relaxation of the nonconvex primal primal problem
%  $(P)$ in (\ref{primal}). This convex relaxation is different with the
%concept of hidden convexity discussed in  \cite{BT} since our  primal problem $(P)$
%is a nonconvex minimization
%with double-well structure. The hidden convexity is a misnomer  concept in this sense.

Writing $(\psi_i+\sigma \varphi_i)^2=\left(\psi_i-\alpha_i\varphi_i
+\varphi_i(\alpha_i+\sigma)\right)^2$,  the problem (D) can be reformulated as
\begin{eqnarray}\label{dual_3}
\begin{array}{lll}P_0^d=&\sup\limits_{\sigma\in R}
&\left\{-\frac{1}{2}\sigma^2-\nu\sigma
-\sum\limits_{i=1}^n(\psi_i-\alpha_i\varphi_i)\varphi_i
-\frac{1}{2}\sum\limits_{i=1}^n\frac{(\psi_i-\alpha_i\varphi_i)^2}
{\alpha_i+\sigma}
-\frac{1}{2}\sum\limits_{i=1}^n\varphi_i^2
(\alpha_i+\sigma)\right\}\\
&\mbox{s.t.}&\alpha_i+\sigma>0,\ i=1,...,n.\end{array}
\end{eqnarray}

\begin{pro}
The Lagrangian dual of Problem (\ref{dual_3}) is the following
linearly constrained convex minimization problem ($P^{dd}$):
\begin{eqnarray}\label{dualofdual}
\begin{array}{rll}
P_0^{dd}=&
\inf\limits_{\lambda \in R^n}&P^{dd}(\lambda)=\sum\limits_{i=1}^n\alpha_i\lambda_i
-\sum\limits_{i=1}^n|\psi_i-\alpha_i\varphi_i|\sqrt{2\lambda_i+\varphi_i^2}
+\frac{1}{2}(\sum\limits_{i=1}^n\lambda_i-\nu)^2-\sum\limits_{i=1}^n(\psi_i-\alpha_i\varphi_i)\varphi_i\\
&\mbox{s.t.}&\lambda_i+\frac{\varphi_i^2}{2}\geq 0,\ i = 1,...,n.
\end{array}
\end{eqnarray}
\end{pro}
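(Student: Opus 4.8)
The plan is to obtain (\ref{dualofdual}) by dualizing, one index at a time, the fractional terms $-\frac{1}{2}\frac{(\psi_i-\alpha_i\varphi_i)^2}{\alpha_i+\sigma}$ that carry all the nonlinearity of (\ref{dual_3}). Writing $a_i=\psi_i-\alpha_i\varphi_i$, the elementary identity
\[
-\tfrac{1}{2}\frac{a_i^2}{\alpha_i+\sigma}=\min_{\rho_i\in R}\Big\{\tfrac{1}{2}\rho_i^2(\alpha_i+\sigma)-a_i\rho_i\Big\}
\]
holds for $\alpha_i+\sigma>0$, with minimizer $\rho_i=a_i/(\alpha_i+\sigma)$; moreover the right-hand side equals $-\infty$ whenever $\alpha_i+\sigma\le 0$, so it simultaneously encodes the feasibility constraint $\alpha_i+\sigma>0$ of (\ref{dual_3}). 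The variables $\rho_i$ play the role of Lagrange multipliers for the (rotated second-order-cone) constraints implicit in these terms. Substituting this representation into the objective of (\ref{dual_3}) turns it into $\sup_{\sigma}\min_{\rho}H(\sigma,\rho)$, where, crucially, $H$ is now an honest quadratic in $\sigma$ on all of $R$: the fractional terms --- and with them the constraint $\alpha_i+\sigma>0$ --- have dissolved into the multipliers.

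The second step is to interchange the order of optimization. For fixed $\rho$ the function $H$ is concave in $\sigma$, and for $\sigma>\sigma_0$ it is convex in $\rho$ (there $\alpha_i+\sigma>0$), so on this concave--convex region a minimax theorem (Sion's) gives $\sup_{\sigma>\sigma_0}\min_\rho H=\min_\rho\sup_{\sigma>\sigma_0}H=P_0^d$. Forming instead the Lagrangian dual function $\theta(\rho)=\sup_{\sigma\in R}H(\sigma,\rho)$ over the full line and setting $P_0^{dd}=\inf_\rho\theta(\rho)$ is exactly the textbook Lagrangian dual; weak duality yields $P_0^d\le P_0^{dd}$, and equality (strong duality) follows from the existence of a saddle point $(\sigma^*,\rho^*)$ with $\rho_i^*=a_i/(\alpha_i+\sigma^*)$ in the interior case where the supremum in (\ref{dual}) is attained at some $\sigma^*>\sigma_0$.

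The third step is the explicit evaluation. Collecting the coefficient of $\sigma$ in $H$ gives $c(\rho)=\tfrac{1}{2}\sum_i\rho_i^2-\tfrac{1}{2}\sum_i\varphi_i^2-\nu$, and the unconstrained maximization $\sup_{\sigma\in R}\{-\tfrac{1}{2}\sigma^2+c(\rho)\sigma\}=\tfrac{1}{2}c(\rho)^2$ (attained at $\sigma=c(\rho)$) produces the square term of (\ref{dualofdual}). I then reparametrize by $\lambda_i=\tfrac{1}{2}\rho_i^2-\tfrac{1}{2}\varphi_i^2$, so that $\rho_i^2=2\lambda_i+\varphi_i^2\ge 0$ forces precisely the stated constraint $\lambda_i+\tfrac{1}{2}\varphi_i^2\ge 0$ and $c(\rho)=\sum_i\lambda_i-\nu$. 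Every surviving term depends on $\rho_i$ only through $\rho_i^2$ except the single linear term $-a_i\rho_i$; since $\lambda_i$ fixes only $|\rho_i|$, the infimum over the sign of $\rho_i$ replaces $-a_i\rho_i$ by $-|a_i|\sqrt{2\lambda_i+\varphi_i^2}=-|\psi_i-\alpha_i\varphi_i|\sqrt{2\lambda_i+\varphi_i^2}$. Assembling the pieces gives exactly $P^{dd}(\lambda)$ together with its box constraints, and convexity is immediate: $\tfrac{1}{2}(\sum_i\lambda_i-\nu)^2$ is convex, the $\alpha_i\lambda_i$ are linear, and each $-|\psi_i-\alpha_i\varphi_i|\sqrt{2\lambda_i+\varphi_i^2}$ is convex as a negative multiple of a concave square root of an affine function.

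The delicate point --- and the one I would spend the most care on --- is the handling of the implicit constraint $\alpha_i+\sigma>0$ across the interchange. The derivation is clean only because introducing the multipliers $\rho_i$ removes the singular fractional terms and leaves a quadratic in $\sigma$ defined on all of $R$, which is what legitimizes the unconstrained inner maximization and the resulting boundary-free form $\tfrac{1}{2}(\sum_i\lambda_i-\nu)^2$. One must verify that $\min_\rho$ faithfully reproduces $-\infty$ outside $\{\alpha_i+\sigma>0\}$, and flag that in the boundary (``hard'') case where the supremum in (\ref{dual}) is reached only as $\sigma\to\sigma_0^+$ --- the regime treated in Theorem \ref{boundarification} --- strong duality between (\ref{dual_3}) and (\ref{dualofdual}) rests on the separate saddle-point argument rather than on a naive application of the minimax theorem.
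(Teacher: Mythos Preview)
Your derivation is correct and reaches exactly the formula (\ref{dualofdual}), but it follows a different route from the paper's. The paper introduces auxiliary \emph{primal} variables $r_i=\alpha_i+\sigma$ together with the equality constraints $\alpha_i+\sigma=r_i$, and then takes $\lambda_i$ to be the Lagrange multipliers for these equalities; the inner optimizations over $\sigma$ and over $r_i>0$ separate, the latter yielding $\sup_{r_i>0}\{-(\lambda_i+\tfrac{1}{2}\varphi_i^2)r_i-\tfrac{1}{2}a_i^2/r_i\}$, which equals $-|a_i|\sqrt{2\lambda_i+\varphi_i^2}$ when $\lambda_i+\tfrac{1}{2}\varphi_i^2\ge 0$ and $+\infty$ otherwise. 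Thus in the paper the $\lambda_i$ are genuine Lagrange multipliers from the outset and the feasibility condition on $\lambda$ arises from finiteness of a conjugate. In your argument, by contrast, the auxiliary variables $\rho_i$ enter through the Fenchel--Young identity $-\tfrac{1}{2}a_i^2/(\alpha_i+\sigma)=\min_{\rho_i}\{\tfrac{1}{2}(\alpha_i+\sigma)\rho_i^2-a_i\rho_i\}$, the sup--inf swap is justified by Sion's theorem, and only after the reparametrization $\lambda_i=\tfrac{1}{2}\rho_i^2-\tfrac{1}{2}\varphi_i^2$ (plus a sign optimization on $\rho_i$) do you land on the same $\lambda$-problem. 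Your route makes the disappearance of the constraint $\alpha_i+\sigma>0$ more transparent and supplies the convexity and strong-duality remarks that the paper leaves implicit, while the paper's route matches more literally the phrase ``Lagrangian dual'' in the statement, since its $\lambda_i$ are multipliers for explicit equality constraints rather than Fenchel variables re-labelled after the fact.
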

\begin{proof}
We first write the (dual) problem (\ref{dual_3}) as
\begin{eqnarray}\label{ri}
\begin{array}{rl}\sup\limits_{\sigma\in R}&\left\{-\frac{1}{2}\sigma^2
-\nu\sigma
-\sum\limits_{i=1}^n(\psi_i-\alpha_i\varphi_i)\varphi_i
-\frac{1}{2}\sum\limits_{i=1}^n\frac{(\psi_i-\alpha_i\varphi_i)^2}
{r_i}-\frac{1}{2}\sum\limits_{i=1}^n\varphi_i^2
r_i\right\}\\
s.t.&\alpha_i+\sigma=r_i,\ i=1,...,n,\\
&r_i>0,\ i=1,...,n.\end{array}
\end{eqnarray}
Let $\lambda_i\in R$ be the Lagragian multipliers associated with
the $i^{th}$ linear equality constraint in (\ref{ri}), then the
Lagrange dual problem becomes
\begin{eqnarray}\label{dualofdual_2}
\begin{array}{l}
-\sum\limits_{i=1}^n(\psi_i-\alpha_i\varphi_i)\varphi_i
+\inf\limits_{\lambda\in R^n}\left\{\sum\limits_{i=1}^n
\alpha_i\lambda_i+h(\lambda)
+\sup\limits_{\sigma\in R}\left[-\frac{1}{2}\sigma^2+\sigma
k(\lambda)\right]\right\}
\end{array}
\end{eqnarray}
where
\begin{eqnarray}
&&h(\lambda)=\sum_{i=1}^n\sup_{r_i>0}\left[-r_i\lambda_i-\frac{\varphi_i^2}{2}r_i
-\frac{(\psi_i-\alpha_i\varphi_i)^2}{2r_i}\right],\label{vz}\\
&&k(\lambda)=\sum_{i=1}^n\lambda_i-\nu\notag.
\end{eqnarray}
The computation of the inner maximization in (\ref{dualofdual_2}) is
$$
\sup_{\sigma\in R}\left(-\frac{1}{2}\sigma^2+\sigma k(\lambda)\right)\\
=\frac{1}{2}k(\lambda)^2.
$$
Consequently, for (\ref{vz}), we have
\begin{eqnarray*}
\sup_{r_i>0}\left[-r_i\lambda_i-\frac{\varphi_i^2}{2}r_i
-\frac{(\psi_i-\alpha_i\varphi_i)^2}{2r_i}\right]
=\left\{ \begin{array}{ll}- |\psi_i
-\alpha_i\varphi_i|\sqrt{2\lambda_i+\varphi_i^2},
& \textnormal{if}\ \lambda_i+\frac{\varphi_i^2}{2}\geq 0,\\+\infty,
&  \textnormal{if}\ \lambda_i+\frac{\varphi_i^2}{2}< 0,
\end{array}  \right.
\end{eqnarray*}
which leads to the result of (\ref{dualofdual}).
\hfill\eproof\end{proof}

To see the correspondence between (\ref{dualofdual}) and $(P)$, we
rewrite $(P)$ by completing the squares as
\begin{eqnarray}\label{SDCDWP}
\begin{array}{ll}
\min\limits_{w}&F(w)=\frac{1}{2}\left\{
\sum\limits_{i=1}^n\left[
\frac{1}{2}(w_i-\varphi_i)^2-\frac{\varphi_i^2}{2}\right]
-\nu\right\}^2\\
&~~~~~~~~~~~~~
+\sum\limits_{i=1}^n \left\{\alpha_i\left[
\frac{1}{2}\left(w_i-\varphi_i\right)^2-\frac{\varphi_i^2}{2}\right]
-\left(\psi_i-\alpha_i\varphi_i\right)w_i\right\}.
\end{array}
\end{eqnarray}
Let $w^*$ be the global minimizer and $i_0\in\{1,...,n\}$ be
arbitrary. Construct $\bar w$ by setting
\begin{center}
$\begin{array}{rcl}
\overline{w}_i&=&\left\{\begin{array}{ll}2\varphi_i-w_i^*& \textnormal{if}\ i=i_0,\\
w_i^* &\textnormal{if}\ i\neq i_0. \end{array}\right.
\end{array}$
\end{center}
Then $F(w^*)\leq F(\bar w)$ and
$(\psi_{i_0}-\alpha_{i_0}\varphi_{i_0})(w_{i_0}^*-\varphi_{i_0})\geq0$.
Since $i_0$ is arbitrarily chosen, it implies that the optimal
solution $w^*$ is also optimal to the following linearly constrained
version:
%
% $(\psi_i-\alpha_i\varphi_i)(w_i^*-\varphi_i)\geq0$ for
%all $i\in[1:n]$. Therefore, the problem (\ref{SDCDWP}) is further
%equivalent to
\begin{eqnarray}\label{SDCDWPconstraint}
%\left\{
\begin{array}{ll}
\min\limits_{w}&\frac{1}{2}\left\{
\sum\limits_{i=1}^n\left[
\frac{1}{2}\left(w_i-\varphi_i\right)^2-\frac{\varphi_i^2}{2}\right]
-\nu\right\}^2
+\sum\limits_{i=1}^n \left\{\alpha_i\left[
\frac{1}{2}\left(w_i-\varphi_i\right)^2-\frac{\varphi_i^2}{2}\right]
-\left(\psi_i-\alpha_i\varphi_i\right)w_i\right\}\\
\mbox{s.t.}&(\psi_i-\alpha_i\varphi_i)(w_i-\varphi_i)\geq0,\ \
i=1,...,n.
\end{array}%\right.
\end{eqnarray}
%\end{thm}

Recall that the problem ($P^{dd}$) in (\ref{dualofdual}) is the
Lagragian dual of the dual problem  and the problem
(\ref{SDCDWPconstraint}) is the original double well problem subject
to $n$ additional linear constraints. We then have the following
result:
%We can see that (\ref{SDCDWPconstraint}) have the same objective function as the primal
%P in (\ref{primal}) with $n$ additional linearly inequality constraints.
%Under these constraints, (\ref{SDCDWPconstraint}) contains the global minimizer $w^*$
%in (P), that is, they have the same optimal solution.
\begin{thm}\label{eqv}
The problem ($P^{dd}$) is of equivalent to the problem
(\ref{SDCDWPconstraint}).
\end{thm}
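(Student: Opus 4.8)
The plan is to establish the equivalence through the explicit change of variables
$$\lambda_i = \tfrac{1}{2}(w_i-\varphi_i)^2 - \tfrac{\varphi_i^2}{2}, \qquad i=1,\dots,n,$$
which turns the quadratic expression appearing throughout (\ref{SDCDWPconstraint}) into the linear variable $\lambda_i$. Under this substitution one has $\lambda_i+\tfrac{\varphi_i^2}{2}=\tfrac{1}{2}(w_i-\varphi_i)^2\ge 0$, so the feasibility constraint of $(P^{dd})$ holds automatically, and $2\lambda_i+\varphi_i^2=(w_i-\varphi_i)^2$ gives $\sqrt{2\lambda_i+\varphi_i^2}=|w_i-\varphi_i|$. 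I would first substitute this into the objective $P^{dd}(\lambda)$ of (\ref{dualofdual}) and compare it term by term with the objective $F(w)$ of (\ref{SDCDWPconstraint}). The quadratic-in-sum terms and the $\alpha_i$-weighted terms match identically; after writing $(\psi_i-\alpha_i\varphi_i)w_i=(\psi_i-\alpha_i\varphi_i)(w_i-\varphi_i)+(\psi_i-\alpha_i\varphi_i)\varphi_i$, the only discrepancy is
$$F(w)-P^{dd}(\lambda)=\sum_{i=1}^n\Big[|\psi_i-\alpha_i\varphi_i|\,|w_i-\varphi_i|-(\psi_i-\alpha_i\varphi_i)(w_i-\varphi_i)\Big],$$
which vanishes precisely when $(\psi_i-\alpha_i\varphi_i)(w_i-\varphi_i)\ge 0$ for every $i$, exactly the linear constraints of (\ref{SDCDWPconstraint}).

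To finish I would argue the two directions of the correspondence. Given any $w$ feasible for (\ref{SDCDWPconstraint}), the $\lambda$ defined above is feasible for $(P^{dd})$ and, by the identity just displayed together with the sign constraints, $P^{dd}(\lambda)=F(w)$. Conversely, given $\lambda$ feasible for $(P^{dd})$ (so $2\lambda_i+\varphi_i^2\ge 0$), I would recover $w$ by choosing the sign of the square root according to the sign of $\psi_i-\alpha_i\varphi_i$, namely
$$w_i=\varphi_i+\operatorname{sign}(\psi_i-\alpha_i\varphi_i)\sqrt{2\lambda_i+\varphi_i^2}.$$
This $w$ satisfies $\tfrac{1}{2}(w_i-\varphi_i)^2-\tfrac{\varphi_i^2}{2}=\lambda_i$ and $(\psi_i-\alpha_i\varphi_i)(w_i-\varphi_i)=|\psi_i-\alpha_i\varphi_i|\sqrt{2\lambda_i+\varphi_i^2}\ge 0$, so it is feasible for (\ref{SDCDWPconstraint}) and realizes $F(w)=P^{dd}(\lambda)$. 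Since feasible points with equal objective value are produced in both directions, the two problems share the same optimal value and their minimizers correspond under the stated map, which is the claimed equivalence.

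The main obstacle is that the substitution $w\mapsto\lambda$ is two-to-one: each admissible $\lambda_i$ is the image of the two points $w_i=\varphi_i\pm\sqrt{2\lambda_i+\varphi_i^2}$, and only one of them is consistent with the term $-|\psi_i-\alpha_i\varphi_i|\sqrt{2\lambda_i+\varphi_i^2}$ carried by $P^{dd}$. The linear constraint $(\psi_i-\alpha_i\varphi_i)(w_i-\varphi_i)\ge 0$ is precisely what selects the correct branch so that $(\psi_i-\alpha_i\varphi_i)w_i$ in the primal objective agrees with the absolute-value term in the dual-of-dual objective. The only case needing separate (but trivial) attention is $\psi_i-\alpha_i\varphi_i=0$, where the cross term vanishes identically and either sign of $w_i-\varphi_i$ is admissible; here the $i$-th constraint in (\ref{SDCDWPconstraint}) holds automatically and the correspondence is unaffected.
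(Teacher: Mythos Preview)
Your proposal is correct and follows essentially the same approach as the paper: both use the substitution $\lambda_i=\tfrac{1}{2}(w_i-\varphi_i)^2-\tfrac{\varphi_i^2}{2}$ in one direction and its sign-selected inverse $w_i=\varphi_i+\operatorname{sign}(\psi_i-\alpha_i\varphi_i)\sqrt{2\lambda_i+\varphi_i^2}$ in the other, and both note the degenerate case $\psi_i-\alpha_i\varphi_i=0$. Your presentation is slightly more economical in that you compute the single difference $F(w)-P^{dd}(\lambda)=\sum_i\big[|\tau_i|\,|w_i-\varphi_i|-\tau_i(w_i-\varphi_i)\big]$ and read off directly that it vanishes iff the linear constraints of (\ref{SDCDWPconstraint}) hold, whereas the paper expands each direction term by term; but the underlying argument is the same.
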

\proof To prove
(\ref{dualofdual})$\Rightarrow$(\ref{SDCDWPconstraint}), we first
claim that for any $\lambda\in\{\lambda \in
R^n|\lambda_i+\frac{\varphi_i^2}{2}\geq0, i=1,...,n\}$ there exists
$w(\lambda)$ such that $P^{dd}(\lambda)=F(w(\lambda)).$ Let
$\tau_i=\psi_i-\alpha_i\varphi_i,i=1,...,n,$ and define
\begin{eqnarray}\label{nonlineartransformation}
\begin{array}{rcl}
w(\lambda)_i&=&\left\{\begin{array}{ll}\varphi_i+\sqrt{2\lambda_i+\varphi_i^2},
&\textnormal{if}\ \tau_i\geq0;\\
\varphi_i-\sqrt{2\lambda_i+\varphi_i^2}, &\textnormal{if}\ \tau_i<0.
\end{array}\right.
\end{array}
\end{eqnarray}
Then we have $w(\lambda)_i-\varphi_i\geq0$ when $\tau_i\geq0$, and
$w(\lambda)_i-\varphi_i<0$ when $\tau_i<0$. Hence the constraint of
(\ref{SDCDWPconstraint}) is satisfied. Moreover, by
(\ref{nonlineartransformation}), we have
\begin{eqnarray}\label{nonlineartransformation2}
\begin{array}{l}
\lambda_i=\frac{1}{2}(w(\lambda)_i-\varphi_i)^2-\frac{\varphi_i^2}{2},\forall
i=1,...,n.
\end{array}
\end{eqnarray}

The objective of (\ref{dualofdual}) becomes
\begin{eqnarray}\label{PddtoF}
\begin{array}{rl}
P^{dd}(\lambda)
=&\sum\limits_{i=1}^n\alpha_i\lambda_i
-\sum\limits_{i=1}^n|\tau_i|\sqrt{2\lambda_i+\varphi_i^2}
+\frac{1}{2}(\sum\limits_{i=1}^n\lambda_i-\nu)^2-\sum\limits_{i=1}^n\tau_i\varphi_i\\
=&\sum\limits_{i=1}^n\alpha_i\lambda_i
-\sum\limits_{\tau_i>0}\tau_i\sqrt{2\lambda_i+\varphi_i^2}
-\sum\limits_{\tau_i<0}(-\tau_i)\sqrt{2\lambda_i+\varphi_i^2}
+\frac{1}{2}(\sum\limits_{i=1}^n\lambda_i-\nu)^2
-\sum\limits_{i=1}^n\tau_i\varphi_i\\
=&\sum\limits_{i=1}^n\alpha_i\lambda_i
-\sum\limits_{\tau_i>0}\tau_i(w(\lambda)_i-\varphi_i)
-\sum\limits_{\tau_i<0}\tau_i(w(\lambda)_i-\varphi_i)
+\frac{1}{2}(\sum\limits_{i=1}^n\lambda_i-\nu)^2
-\sum\limits_{i=1}^n\tau_i\varphi_i\\
=&\sum\limits_{i=1}^n\alpha_i\lambda_i
-\sum\limits_{i=1}^n\tau_i(w(\lambda)_i-\varphi_i)
+\frac{1}{2}(\sum\limits_{i=1}^n\lambda_i-\nu)^2
-\sum\limits_{i=1}^n\tau_i\varphi_i\\
=&\sum\limits_{i=1}^n\alpha_i[\frac{1}{2}(w(\lambda)_i-\varphi_i)^2-\frac{\varphi_i^2}{2}]
-\sum\limits_{i=1}^n\tau_iw(\lambda)_i
+\frac{1}{2}(\sum\limits_{i=1}^n[\frac{1}{2}(w(\lambda)_i-\varphi_i)^2-\frac{\varphi_i^2}{2}]-\nu)^2\\
=&F(w(\lambda)),
\end{array}
\end{eqnarray}
which is exactly (\ref{SDCDWPconstraint}) subject to $n$ linear
constraints
$(\psi_i-\alpha_i\varphi_i)(w(\lambda)_i-\varphi_i)\geq0,\
i=1,...,n$.

To prove (\ref{SDCDWPconstraint})$\Rightarrow$(\ref{dualofdual}), we
claim that for any $w\in\{w \in
R^n|(\psi_i-\alpha_i\varphi_i)(w_i-\varphi_i)\geq0, i=1,...,n\}$,
there exists $\lambda(w)$ such that $F(w)=P^{dd}(\lambda(w)).$
Define
\begin{eqnarray}\label{nonlineartransformation3}
\begin{array}{l}
\lambda(w)_i=\frac{1}{2}(w_i-\varphi_i)^2-\frac{\varphi_i^2}{2},
i=1,...,n,
\end{array}
\end{eqnarray}
then
$\lambda(w)_i+\frac{\varphi_i^2}{2}=\frac{1}{2}(w_i-\varphi_i)^2\geq0$.
This means that the constraint in (\ref{dualofdual}) always holds.
Moreover, (\ref{nonlineartransformation3}) says that
$(w_i-\varphi_i)^2=2\lambda(w)_i+\varphi_i^2$, with the linearly
constraint in (\ref{SDCDWPconstraint}), we have
\begin{eqnarray}\label{nonlineartransformation4}
\begin{array}{l}
w_i-\varphi_i=
\left\{\begin{array}{ll}\sqrt{2\lambda(w)_i+\varphi_i^2},
&\textnormal{if}\ \tau_i>0;\\
-\sqrt{2\lambda(w)_i+\varphi_i^2},
&\textnormal{if}\
\tau_i<0;\\
\sqrt{2\lambda(w)_i+\varphi_i^2},
&\textnormal{if}\
\tau_i=0\mbox{ and }w_i-\varphi_i\geq0;\\
-\sqrt{2\lambda(w)_i+\varphi_i^2},
&\textnormal{if}\
\tau_i=0\mbox{ and }w_i-\varphi_i<0.
\end{array}\right.
\end{array}
\end{eqnarray}
The objective of (\ref{SDCDWPconstraint}) becomes
\begin{eqnarray*}
\begin{array}{rl}
F(w)
=&\frac{1}{2}\left[\sum\limits_{i=1}^n\lambda(w)_i-\nu\right]^2
+\sum\limits_{i=1}^n\alpha_i\lambda(w)_i
-\sum\limits_{i=1}^n\tau_iw_i\\
=&\frac{1}{2}\left[\sum\limits_{i=1}^n\lambda(w)_i-\nu\right]^2
+\sum\limits_{i=1}^n\alpha_i\lambda(w)_i
-\sum\limits_{i=1}^n\tau_i(w_i-\varphi_i)-\sum\limits_{i=1}^n\tau_i\varphi_i\\
=&\frac{1}{2}\left[\sum\limits_{i=1}^n\lambda(w)_i-\nu\right]^2
+\sum\limits_{i=1}^n\alpha_i\lambda(w)_i
-\sum\limits_{\tau_i>0}\tau_i(w_i-\varphi_i)
-\sum\limits_{\tau_i<0}\tau_i(w_i-\varphi_i)
-\sum\limits_{i=1}^n\tau_i\varphi_i\\
=&\frac{1}{2}\left[\sum\limits_{i=1}^n\lambda(w)_i-\nu\right]^2
+\sum\limits_{i=1}^n\alpha_i\lambda(w)_i
-\sum\limits_{\tau_i>0}|\tau_i|\sqrt{2\lambda(w)_i+\varphi_i^2}\\
&-\sum\limits_{\tau_i<0}(-|\tau_i|)(-\sqrt{2\lambda(w)_i+\varphi_i^2})
-\sum\limits_{i=1}^n\tau_i\varphi_i\\
=&\frac{1}{2}\left[\sum\limits_{i=1}^n\lambda(w)_i-\nu\right]^2
+\sum\limits_{i=1}^n\alpha_i\lambda(w)_i
-\sum\limits_{i=1}^n|\tau_i|\sqrt{2\lambda(w)_i+\varphi_i^2}
-\sum\limits_{i=1}^n\tau_i\varphi_i\\
=&P^{dd}(\lambda(w)),
\end{array}
\end{eqnarray*}
which is exactly (\ref{dualofdual}) subject to $n$ linear
constraints $\lambda(w)_i+\frac{\varphi_i^2}{2}\geq0,\ \ i=1,...,n$.

%By substituting
%$\lambda_i=\frac{1}{2}(w_i-\varphi_i)^2-\frac{{\varphi_i}^2}{2}$ for
%$i\in[1:n]$ into (\ref{SDCDWPconstraint}), or equivalently, by
%
%we can turn (\ref{SDCDWPconstraint}) into
%\begin{eqnarray*}
%\begin{array}{l}
%\left\{
%\begin{array}{ll}
%\min&\frac{1}{2}\left(
%\sum\limits_{i=1}^n\lambda_i-\nu\right)^2
%+\sum\limits_{i=1}^n \left[\alpha_i\lambda_i
%-\left(\psi_i-\alpha_i\varphi_i\right)w_i\right]\\
%\mbox{s.t.}&(\psi_i-\alpha_i\varphi_i)(w_i-\varphi_i)\geq0
%\end{array}\right.\\
%=\left\{
%\begin{array}{ll}
%\min\limits_{\lambda}&\frac{1}{2}\left(
%\sum\limits_{i=1}^n\lambda_i-\nu\right)^2
%+\sum\limits_{i=1}^n \left[\alpha_i\lambda_i
%-\left(\psi_i-\alpha_i\varphi_i\right)\varphi_i
%-\left|\psi_i-\alpha_i\varphi_i\right|\sqrt{2\lambda_i+\varphi_i^2}
%\right]\\
%\mbox{s.t.}&|\psi_i-\alpha_i\varphi_i|\sqrt{2\lambda_i+\varphi_i^2}\geq0,
%\end{array}\right.,\ \forall i\in[1:n],
%\end{array}
%\end{eqnarray*}
%which is indeed (\ref{dualofdual}) if
%$|\psi_i-\alpha_i\varphi_i|\sqrt{2\lambda_i+\varphi_i^2}\geq0$ is
%replaced by $2\lambda_i+\varphi_i^2\geq0$.
\hfill\eproof

Notice that, in Theorem 3 of \cite{FLSX}, it was claimed that the
problem $(P^{dd})$ is equivalent to the primal problem ($P$), and the
nonlinear transformation (\ref{nonlineartransformation}) is
one-to-one. From the above derivations, the correct statements
should be that the dual of the dual problem ($P^{dd}$) is equivalent only to ``part"
of $(P)$ confined by some additional linear constraints.
Indeed, in (\ref{nonlineartransformation}),
if $\tau_i=0$,
we can define $w(\lambda)_i$ as $\varphi_i+\sqrt{2\lambda_i+\varphi_i^2}$
or $\varphi_i-\sqrt{2\lambda_i+\varphi_i^2}$ which leads to the same result.
The nonlinear transformation (\ref{nonlineartransformation}) is not
one-to-one when there is some $i$ such that
$\tau_i=0$.
For each $\lambda$, it may corresponding to at most $2^n$ points of $w(\lambda)$ such that they lead
to the same value of the objective function in (\ref{PddtoF}).
Moreover, in (\ref{nonlineartransformation3}),
for each given $w$, there is exactly one $\lambda(w)$ corresponding to $w$.
This will be shown in Example 3 below.

%
%\begin{eqnarray}\label{nonlineartransformation}
%\begin{array}{ll}
%w_i=\varphi_i\pm\sqrt{2\lambda_i+\varphi_i^2}
%\end{array}
%\end{eqnarray}
%where $(\psi_i-\alpha_i\varphi_i)(w_i-\varphi_i)\geq0,$

%
%we exactly obtain the dual of the dual problem ($P^{dd}$). That is,
%problem (\ref{primal}) is equivalent to ($P^{dd}$) via the nonlinear
%transformation
%\begin{eqnarray}\label{nonlineartransformation}
%\begin{array}{ll}
%w_i=\varphi_i\pm\sqrt{2\lambda_i+\varphi_i^2}
%\end{array}
%\end{eqnarray}
%where $(\psi_i-\alpha_i\varphi_i)(w_i-\varphi_i)\geq0.$
%Moreover, if $(\psi_i-\alpha_i\varphi_i)\neq0$ the nonlinear
%transformation is one-to-one.

%
%Notice that, if $(\psi_i-\alpha_i\varphi_i)\neq0$, the nonlinear
%transformation  (\ref{nonlineartransformation}) is not one-to-one.

%Theorem \ref{eqv} explains why the nonconvex problem (P) has a
%strong duality with no duality gap.

%%%%%%%%%%%%%%%%%%%%%%%%%%%%%%%%%%%%%%%%%%%%%%%% begin example
\section{Numerical Examples}\label{example}

We use some numerical examples to illustrate the (DWP) problem, its
global minimum, and its dual relationship.

\bigskip  \textbf{Example 1}
Let $A=-2, B=(0,-1)^T, c=(0,2)^T, d=14, f=1$. The primal problem (P)
becomes
\begin{eqnarray}\label{ex1primal}
\begin{array}{rl}
min&P(w)=\frac{1}{2}(\frac{1}{2}w^2+2w-12)^2-w^2-w
\end{array}
\end{eqnarray}
The global minimum  locates at $x^*=-7.748$ with the optimal value
$-49.109$.

The dual problem (D) is
\begin{eqnarray}\label{ex1dual}
\begin{array}{rl}
sup&\Pi^d(\sigma)=-\frac{1}{2}\sigma^2
-\frac{(1-2\sigma)^2}{2\sigma-4}-12\sigma\\
s.t.&\sigma\in\mathcal{D}=(2,\infty).
\end{array}
\end{eqnarray}

%\begin{figure}[!hbtp]
%\begin{center}
%\includegraphics[width=1.0\textwidth]{Fig_1.eps}
%\end{center}
%\caption{The graph of $P(w)$ in Example 1 and the corresponding dual of the dual problem.}
%\end{figure}

The supremum occurs at $\sigma^*=2.522\in\mathcal{D}$.
The corresponding primal solution is
$w(\sigma^*)=-7.748$. The dual of the dual problem
(\ref{dualofdual}) is
\begin{eqnarray}\label{ex1dod}
\begin{array}{rl}
P_0^{dd}=&-6+\inf_\lambda[-2\lambda-3\sqrt{2\lambda+4}+\frac{1}{2}(\lambda-12)^2]\\
s.t.&\lambda+2\geq0.
\end{array}
\end{eqnarray}
%whose objective function is drawn at the right of Figure 2. 
The
nonlinear transformation (\ref{nonlineartransformation}) in this
example is $w=-2-\sqrt{2\lambda+4}$, with which we have the dual of
the dual problem:
\begin{eqnarray}\label{ex1dodP}
\begin{array}{ll}
\min&P(w)\\
\mbox{s.t.}&w+2\leq0.
\end{array}
\end{eqnarray}
This is indeed the primal (P) subject to one linear constraint
$w\le-2$. The global minimum of (\ref{ex1dod}) is mapped to the
global minimum of (\ref{ex1dodP}), which is the global minimum of
(P).

\textbf{Example 2} Let
$A=Diag(1,-2),B=\tiny\left[\begin{array}{cc}-0.07&0.04\\-0.01&-1\end{array}\right]\normalsize,
c=(-2,0)^T,d=28,f=(-7,-22)^T.$ After diagonalizing $A$ and $B^TB$
simultaneously, the primal problem (P) has the form
\begin{eqnarray}\label{ex2primal}
\begin{array}{rll}
min&P(w)=&\frac{1}{2}(\frac{1}{2}w_1^2+\frac{1}{2}w_2^2-1.998w_1+0.082w_2-26)^2\\
&&+101.035w_1^2-0.998w_2^2+98.285w_1+21.885w_2
\end{array}
\end{eqnarray}
%The graph of $P(w)$ is depicted at the left of Figure 3. 
Its dual
problem (D) becomes
\begin{eqnarray}\label{ex2dual}
\begin{array}{rl}
sup&\Pi^d(\sigma)=-\frac{1}{2}\sigma^2
-\frac{1}{2}[\frac{(-97.285+1.998\sigma)^2}{\sigma+202.071}+\frac{(-21.885-0.082\sigma)^2}{\sigma-1.997}]-26\sigma\\
s.t.&\sigma\in\mathcal{D}=(1.997,\infty)
\end{array}
\end{eqnarray}

The supremum occurs at
$\sigma^*=4.8475\in\mathcal{D}$. The corresponding
primal solution $w(\sigma^*)=(-0.423,-7.817)^T$ is optimal to (P)
with the optimal value $-243.416$.

%\begin{figure}[!hbtp]
%\begin{center}
%\includegraphics[width=1.0\textwidth]{Fig_2.eps}
%\end{center}
%\caption{The graph of $P(w)$ in Example 2 and the corresponding dual of the dual problem.}
%\end{figure}

The dual of the dual problem (\ref{dualofdual}) has the form
\begin{eqnarray}\label{ex2dod}
\begin{array}{rl}
P_0^{dd}=&999.529
+\inf_\lambda[202.071\lambda_1-1.997\lambda_2
-501.088\sqrt{2\lambda_1+3.993}\\
&-22.049\sqrt{2\lambda_2+0.0067}
+\frac{1}{2}(\lambda_1+\lambda_2-26)^2]\\
s.t.&\lambda_1\geq-1.9967,\ \lambda_2\geq-0.0034.
\end{array}
\end{eqnarray}
Under the one-to-one
nonlinear transformation of $w_1=1.998-\sqrt{2\lambda_1+3.993}$ and
$w_2=-0.082-\sqrt{2\lambda_2+0.0067}$, we have
the primal problem (P) as follows:
\begin{eqnarray}\label{ex2dodP}
\begin{array}{ll}
\min&P(w)\\
\mbox{s.t.}&w_1\leq1.998,\ w_2\leq-0.082.
\end{array}
\end{eqnarray}
%The graph of (\ref{ex2dodP}) is superimposed on Figure 6 with the
%black bold net. 
We can see the optimal solution of
(\ref{ex2dod}) corresponding to $w(\sigma^*)=(-0.423,-7.817)^T$ is
$\lambda^*=(0.9346,29.9117)$ with the same value $-243.416$.

\bigskip \textbf{Example 3 (The Mexican hat)} \noindent Let $A=0_{2\times2},
B=Diag(-0.5,-0.5), c=(0,0)^T, d=38, f=(0,0)^T$. The primal problem
is
\begin{eqnarray}\label{ex3primal}
\begin{array}{rl}
min&P(w)=\frac{1}{2}(\frac{1}{2}w_1^2+\frac{1}{2}w_2^2-38)^2.
\end{array}
\end{eqnarray}
There is a local maximum
at $(0,0)$. The dual problem (D) is
\begin{eqnarray}\label{ex3dual}
\begin{array}{rl}
sup&\Pi^d(\sigma)=-\frac{1}{2}\sigma^2-38\sigma\\
s.t.&\sigma\in\mathcal{D}=(0,\infty).
\end{array}
\end{eqnarray}

The supremum occurs at the left boundary point $\sigma^*=0$, with
$\lim_{\sigma\rightarrow0^+}{d\Pi^d(\sigma)\over
d\sigma}=-38<0$. By Theorem 1, the global minimal solution set is
the circle $S^*=\{(w_1,w_2) \in
R^2|\frac{1}{2}(w_1)^2+\frac{1}{2}(w_2)^2=38\}$, with the optimal
value of $0$.

%\begin{figure}[!hbtp]
%\begin{center}
%\includegraphics[width=1.0\textwidth]{Fig_3.eps}
%\end{center}
%\caption{The graph of $P(w)$ in Example 3 and the corresponding dual of the dual problem.}
%\end{figure}
The dual of the dual in this example is
\begin{eqnarray}\label{ex3dod}
\begin{array}{rl}
P_0^{dd}=&\inf_\lambda\frac{1}{2}(\lambda_1+\lambda_2-38)^2\\
s.t.&\lambda_1\geq0,\ \lambda_2\geq0.
\end{array}
\end{eqnarray}
%whose convex objective function is shown at the right of Figure 4.
Since $(\psi_i-\alpha_i\varphi_i)=0,~i=1,2$, the nonlinear
transformation of $w_i=\pm\sqrt{2\lambda_i},i=1,2$, is not
one-to-one, but it maps (\ref{ex3dod}) back to the {\it entire}
primal problem (\ref{ex3primal}) with {\it no} additional
constraint. The optimal solution set $S^*$ is collapsed into the
line segment $\{(\lambda_1,\lambda_2) \in R^2 \vert
\lambda_1+\lambda_2=38,~\lambda_1\geq0,\ \lambda_2\geq0\}$ in the
dual of the dual problem (\ref{ex3dod}). It is interesting to see
that the local maximum $(0,0)$ in (\ref{ex3primal}) is again mapped
to a local maximum $(0,0)$ in (\ref{ex3dod})

\section{Conclusions of Part I}
To the best of our knowledge, the double well potential problem
proposed in this paper is the first ever mathematical programming
approach to analyze the discrete approximation of the generalized
Ginzburg-Landau functional. The global minimum of the problem can be
obtained by solving the dual of a special type of nonconvex
quadratic minimization problem subject to a single quadratic
equality constraint. After the space reduction, the objective
function and the constraint can be simultaneously diagonalized via
congruence so that the whole problem can be written as the sum of
separated squares. We emphasize that the space reduction also
eliminates the ``hard cases'' in (\ref{DWP}), those that do not
satisfy the Slater constraint qualification and thus fail the dual
approach in general. In the second part of the paper, we go further
to study the analytical properties of the local
minimizers/maximizers of the problem as they also provide
interesting physical and mathematical properties. The results then
lead to an efficient polynomial-time algorithm for computing all
local extremum points, including the local non-global minimizer, the
local maximizer, and the global minimum solution.

\section*{Acknowledgments}
Fang's research work was supported by the US National Science Foundation Grant DMI-0553310.
Gao's research work was supported by AFOSR Grant FA9550-09-1-0285.
Sheu's research work was sponsored
partially by Taiwan NSC 98-2115-M-006 -010 -MY2 and by National
Center for Theoretic Sciences (The southern branch).
 Xing's research work was supported by NSFC No.
11171177.

\end{document}